\numberwithin{equation}{section}
\newcommand{\N}{\mathbb{N}}
\newcommand{\R}{\mathbb{R}}
\theoremstyle{plain}
\newtheorem{thm}{Theorem}[section]
\newtheorem{lem}[thm]{Lemma}
\newtheorem{prop}[thm]{Proposition}
\newtheorem{cor}{Corollary}
\theoremstyle{definition}
\theoremstyle{remark}
\newtheorem*{rem}{Remark}
\newcommand*\deuxlignes[1]{%
\begin{tabular}{@{}c@{}}
#1%
\end{tabular}%
}
\begin{document}
\title[Extension of continuous functions and w. d. spaces]{Extension of continuous functions and weakly developable spaces}
\author{Boualem ALLECHE}
\address{Laboratoire de M\'ecanique, Physique et
Mod\'elisation Math\'ematique,\\
Universit\'e de  M\'ed\'ea, 26000 M\'ed\'ea, Algeria}
\email{alleche.boualem@univ-medea.dz}
\email{alleche.boualem@gmail.com} \dedicatory{Laboratoire de
M\'ecanique, Physique et Mod\'elisation Math\'ematique.\\
Universit\'e de M\'ed\'ea. Ain Dheb.\\
26000 M\'ed\'ea. Algeria.\\
alleche.boualem@univ-medea.dz, alleche.boualem@gmail.com}
\keywords{Weakly developable; Semi-metrizable; Semi-stratifiable;
Distance function; Set-valued function; Selection}
\subjclass[2010]{Primary 54E25; 54E30; 54E35; Secondary 54C20;
54C55; 54C65}
\begin{abstract}
The aim of this paper is to investigate weakly developable spaces.
For a comparison with semi-metrizable spaces, we introduce and
study a class of spaces among those of weakly developable spaces,
semi-metrizable spaces and first countable spaces having a
$G_\delta$-diagonal. Some results are obtained and applications to
the problem of extending continuous functions are discussed.
\end{abstract}
\date{\today}
\maketitle

\section{Introduction}
The problem of extending a continuous function $f:A\longrightarrow
Y$ from a closed subset $A$ of a space $X$ to some neighborhood
$U$ of $A$ in $X$ or to all of $X$ is very often encountered in
many areas of mathematics. Tietze-Urysohn extension theorem solves
positively the problem when $Y=\R$ and $X$ normal (metric and
compact spaces are normal). Several generalizations and other
extension theorems have been also obtained by J. Dugundji, C.H.
Dowker, K. Borsuk, C. R. Borges and many other mathematicians. To
deal with this problem, E. Michael introduced the notion of
selection theory which is today an indispensable tool for a large
number of those working in topology, functional analysis,
set-valued analysis, dynamical systems, control theory,
approximation theory, convex geometry, economic mathematics and so
on. J. Calbrix and the present author introduced the notion of
weakly developable space mainly to generalize a theorem of E.
Michael on double selection. This notion fits very well in the
class of generalized metric spaces.

The purpose of this paper is to investigate weakly developable
spaces by introducing and studying a notion which joins some
properties common to weakly developable spaces and semi-metrizable
spaces. Some results and characterizations are established.
Applications to the problem of extending continuous functions and
set-valued functions as well as to different other problems are
discussed.

The first version of this paper was originally written to be
presented during RAMA'8, 26-29 November, 2012, Algiers, Algeria.
\section{Notations and preliminary results}
Throughout this paper, all spaces are assumed to be
$T_1$-topological spaces. The notions not defined here can be
found in \cite{bu,en,ggr}.

Let $X$ be a topological space. For a collection ${\mathcal A}$ of
subsets of topological space $X$, we write ${\overline{{\mathcal
A}}}=\left\{{\overline A}\mid A\in{\mathcal A}\right\}$. Recall
that for a sequence of open covers $\left({\mathcal G}_n\right)_n$
of $X$, then $St\left(x,{\mathcal
G}_n\right)=\bigcup\left\{G\in{\mathcal G}_n\mid x\in G\right\}$,
for every $x\in X$ and $n\in{\mathbb N}$.

A sequence of open covers $\left({\mathcal G}_n\right)_n$ of $X$
is called a
\begin{itemize}
\item \emph{development} and the space \emph{developable} if, for
every $x\in X$, the sequence $\left(st\left(x,{\mathcal
G}_n\right)\right)_n$ is a base at $x$. Furthermore, a regular
developable space is called \emph{a Moore space}. \item {weak
development} and the space \emph{weakly developable} if, for every
$x\in X$ and $\left(G_n\right)_n$ such that $x\in G_n\in{\mathcal
G}_n$ for every $n$, then the sequence $\left(\bigcap_{i\leq
n}G_i\right)_n$ is a base at $x$. \item \emph{weak
$k$-development} and the space \emph{weakly $k$-developable} if,
for every compact subset $K$ of $X$ and every ${\mathcal
H}_n\subset {\mathcal G}_n$ such that, for every $n$, ${\mathcal
H}_n$ is finite, $K\cap H\not=\emptyset$ for every $H\in{\mathcal
H}_n$, and $K\subset \bigcup{\mathcal H}_n$, then the sequence
$\left(\bigcap_{i\leq n}\left(\bigcup{\mathcal
H}_i\right)\right)_n$ is a base at $K$. \end{itemize}

The notions of developable space and weakly $k$-developable space
are different and both stronger than that of weakly developable
space. See \cite{aac}.

A sequence of open covers $\left({\mathcal G}_n\right)_n$ of a
space $X$ is called a
\begin{itemize}
\item \emph {$G_\delta$-diagonal sequence} if, for every $x\in X$,
$\bigcap_n St\left(x,{\mathcal G}_n\right)=\left\{x\right\}$.
\item \emph{$G_\delta^\ast$-diagonal sequence} if, for every $x\in
X$, $\bigcap_n{\overline{St\left(x,{\mathcal
G}_n\right)}}=\left\{x\right\}$. \item \emph{weakly
$G_\delta^\ast$-diagonal sequence} (see \cite{all1}) if, for every
$x$ and $\left(G_n\right)_n$ such that $x\in G_n\in{\mathcal G}_n$
for every $n$, then $\bigcap_n({\overline{\bigcap_{k\leq
n}G_k}})=\left\{x\right\}$.
\end{itemize}

Every $G_\delta^\ast$-diagonal sequence is a weakly
$G_\delta^\ast$-diagonal sequence, every weakly
$G_\delta^\ast$-diagonal sequence is a $G_\delta$-diagonal
sequence, and from every $G_\delta$-diagonal sequence on a regular
space, we can construct a weakly $G_\delta^\ast$-diagonal
sequence. The converse is false in general (see \cite{all1}).

It is well known (Ceder \cite{ceder}) that a space has a
$G_\delta$-diagonal sequence if and only if the diagonal
$\Delta=\left\{(x,x)\mid x\in X\right\}$ is a $G_\delta$-set in
$X\times X$. In this case, one says that the space \emph{has a
$G_\delta$-diagonal}.

A sequence of open covers $\left({\mathcal G}_n\right)_n$ of a
space $X$ is called a
\begin{itemize}
\item \emph{$w\Delta$-sequence} and the space a
\emph{$w\Delta$-space}, if for every $x\in X$ and
$\left(x_n\right)_n$ such that $x_n\in St\left(x,{\mathcal
G}_n\right)$ for every $n$, the sequence $\left(x_n\right)_n$ has
a cluster point. \item \emph{weakly $w\Delta$-sequence} and the
space a \emph{weakly $w\Delta$-space} (see \cite{all1}), if for
every $x\in X$ and $\left(G_n\right)_n$ such that $x\in
G_n\in{\mathcal G}_n$ for every $n$, then whenever
$x_n\in\bigcap_{k\leq n}G_n$ for every $n$, the sequence
$\left(x_n\right)_n$ has a cluster point.
\end{itemize}

It is well known (Hodel \cite{hodel}) that a space is developable
if and only if it is $w\Delta$-space and has a
$G_\delta^\ast$-diagonal. Also a space is weakly developable if
and only if it is weakly $w\Delta$-space and has a weakly
$G_\delta^\ast$-diagonal. See \cite{all1}.

The notion of $p$-space, first introduced by A. V.
Arhangel'ski\u{\i} in \cite{a8} for completely regular spaces, has
been internally characterized by D. K. Burke in \cite{bu2} as
follows: A space $X$ is a $p$-space if and only if it has a
sequence of open covers $\left({\mathcal G}_n\right)_n$ such that
whenever $x\in X$, $G_n\in{\mathcal G}_n$ and $x\in G_n$ for every
$n$, then $\bigcap_n{\overline{G_n}}$ is compact and every open
neighborhood of $\bigcap_n{\overline{G_n}}$ contains some
$\bigcap_{k\leq n}{\overline{G_k}}$. Such sequence will be called
a \emph{$p$-sequence}. The notions of $p$-space and
$w\Delta$-space are different and both strictly stronger than that
of weakly $w\Delta$-space (see \cite{all1}]. Every countably
compact space which is not a $k$-space provides us with an example
of a $w\Delta$-space which is not a $p$-space (see
\cite[Example~3.23]{ggr}). On the other hand, the Gruenhage's
space (see \cite[Example~2.17]{ggr} or \cite[Example~3]{aac}) is a
locally compact submetrizable space. This space is
weakly-$k$-developable and hence a $p$-space but it is not a
$w\Delta$-space. Let us point out here that regular weakly
developable spaces are in fact $p$-spaces.
\begin{prop}
A regular space is weakly developable if and only if it is a $p$-space
and has a $G_\delta$-diagonal.
\end{prop}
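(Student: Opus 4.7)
The plan is to prove each direction separately by constructing appropriate sequences of open covers and exploiting the interplay between the $p$-property and the weakly $G_\delta^\ast$-diagonal property.

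For the ``only if'' direction, I would start from a weak development $(\mathcal{G}_n)_n$ of $X$. Since a weak development is automatically a weakly $G_\delta^\ast$-diagonal sequence (as recalled in the preliminaries), and such a sequence is in turn a $G_\delta$-diagonal sequence, the $G_\delta$-diagonal half is free. The real work is producing a $p$-sequence: using regularity, refine each $\mathcal{G}_n$ to an open cover $\mathcal{V}_n$ whose members have their closures contained in members of $\mathcal{G}_n$. Then, for any $V_n\in\mathcal{V}_n$ containing $x$, say with $\overline{V_n}\subseteq G_n\in\mathcal{G}_n$, the weak development property forces $\bigcap_n G_n=\{x\}$, hence $\bigcap_n\overline{V_n}=\{x\}$ is compact, and any neighborhood of $x$ contains some $\bigcap_{k\leq n}G_k\supseteq\bigcap_{k\leq n}\overline{V_k}$. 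This is exactly the $p$-sequence condition, so $X$ is a $p$-space.

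For the ``if'' direction, I would fix a $p$-sequence $(\mathcal{G}_n)_n$ and, invoking the construction from the preliminaries on regular spaces with a $G_\delta$-diagonal, a weakly $G_\delta^\ast$-diagonal sequence $(\mathcal{F}_n)_n$. The target is to show that the common refinement $\mathcal{H}_n=\{G\cap F\mid G\in\mathcal{G}_n,\ F\in\mathcal{F}_n\}$ is a weak development. Given $x\in H_n=G_n\cap F_n$ for all $n$ and an open neighborhood $U$ of $x$, put $K=\bigcap_n\overline{G_n}$, which is compact by the $p$-property. The compact set $K\setminus U$ does not contain $x$, so the weakly $G_\delta^\ast$-diagonal property provides, for each $y\in K\setminus U$, an index $n_y$ with $y\notin\overline{\bigcap_{k\leq n_y}F_k}$. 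A finite subcover of $K\setminus U$ by the corresponding open complements $W_y=X\setminus\overline{\bigcap_{k\leq n_y}F_k}$ yields an index $N_3$ and an open set $W\supseteq K\setminus U$ satisfying $W\cap\overline{\bigcap_{k\leq N}F_k}=\emptyset$ for all $N\geq N_3$. Applying the $p$-property to the open neighborhood $U\cup W$ of $K$ then gives $N_1$ with $\bigcap_{k\leq N_1}\overline{G_k}\subseteq U\cup W$; setting $N=\max(N_1,N_3)$, the surplus $W$-portion of $\bigcap_{k\leq N}\overline{G_k}$ is annihilated once intersected with $\bigcap_{k\leq N}F_k$, yielding $\bigcap_{k\leq N}H_k\subseteq U$.

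The hard part will be this second direction. The $p$-sequence alone only shrinks neighborhoods down to the compact set $K$, not to $\{x\}$, while the weakly $G_\delta^\ast$-diagonal alone does not give any compactness of closed intersections. The delicate point is choreographing these two controls: compactness of $K\setminus U$ together with the weakly $G_\delta^\ast$-diagonal allows a finite-subcover extraction that cuts $K\setminus U$ away from some $\overline{\bigcap_{k\leq N}F_k}$, while the $p$-property then absorbs the resulting excess via the open neighborhood $U\cup W$ of $K$.
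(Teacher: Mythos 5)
Your proof is correct. The forward direction is essentially the paper's: use regularity to shrink the weak development so that closures are controlled by members of $\mathcal{G}_n$, observe that $\bigcap_n \overline{V_n}=\{x\}$ is compact and that finite intersections of the $G_n$'s witness the neighborhood condition; the paper does the same thing by assuming WLOG that $\overline{\mathcal{G}}_{n+1}$ refines $\mathcal{G}_n$. For the converse the paper only offers a two-sentence sketch --- ``construct a $G_\delta$-diagonal sequence which is also a $p$-sequence; it is the required weak development'' --- which, read literally, leaves a gap: for a common refinement $\mathcal{H}_n$ the $G_\delta$-diagonal property only yields $\bigcap_n H_n=\{x\}$, while the $p$-property speaks about neighborhoods of the possibly larger compact set $\bigcap_n\overline{H_n}$, so one must either upgrade the diagonal sequence (via regularity) to force $\bigcap_n\overline{H_n}=\{x\}$ or argue around the discrepancy. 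Your argument takes the second route and is genuinely different in mechanism: you keep the two covers' roles separate inside $\mathcal{H}_n=\{G\cap F\}$, let $K=\bigcap_n\overline{G_n}$ be large, use compactness of $K\setminus U$ together with the weakly $G_\delta^\ast$-diagonal sequence to produce an open $W\supseteq K\setminus U$ disjoint from $\overline{\bigcap_{k\leq N}F_k}$ for large $N$, and then let the $p$-property absorb the excess through the neighborhood $U\cup W$ of $K$. This costs a finite-subcover extraction but buys an argument that works directly with the plain common refinement and makes explicit exactly how the two hypotheses interact; the paper's intended argument (collapsing $\bigcap_n\overline{H_n}$ to $\{x\}$ first) is shorter once the upgraded diagonal sequence is in hand. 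One cosmetic remark: in the forward direction a weak development on a merely $T_1$ space is a $G_\delta$-diagonal sequence but not automatically a weakly $G_\delta^\ast$-diagonal sequence (closures require regularity); since you only use the $G_\delta$-diagonal conclusion and the space is regular anyway, this does not affect your proof.
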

\begin{proof}
Let $X$ be a weakly developable space and let $({\mathcal G}_n)$
be its weak development. We may assume without loss of generality
that ${\mathcal G}_1=\{X\}$ and $\overline{\mathcal G}_{n+1}$
refines ${\mathcal G}_n$ for every $n$. It is easy to check that
$({\mathcal G}_n)$ is the required $G_{\delta}$-diagonal sequence
and it is a $p$-sequence where $\bigcap_n{\overline{G_n}}$ is
exactly the singleton $\{x\}$.

Conversely, we can construct a $G_{\delta}$-diagonal sequence
which is also a $p$-sequence. This sequence is then the required
weak development for $X$.
\end{proof}

The following result could be seen as a generalization of
\cite[Corollary~3.4]{ggr} and provides us with another
factorization of weakly developable spaces, developable spaces and
metrizable spaces.
\begin{thm}\label{stratif}
Let $X$ be a topological space.
\begin{enumerate}
\item $X$ is weakly developable if and only if it is a weakly
$w\Delta$-space and has a weakly $G_{\delta}^\ast$-diagonal. \item
$X$ is developable if and only if it is a submetacompact weakly
$w\Delta$-space and has a weakly $G_{\delta}^\ast$-diagonal. \item
$X$ is metrizable if and only if it is a paracompact weakly
$w\Delta$-space and has a weakly $G_{\delta}^\ast$-diagonal.
\end{enumerate}
\end{thm}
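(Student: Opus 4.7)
The plan is to treat the three parts in turn, with part~(1) serving as the engine for the other two. Part~(1) is precisely the characterization of weak developability recalled in the preliminaries from \cite{all1}, so I would cite it without further argument; the substantive content lies in parts~(2) and~(3).

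For part~(2), the forward direction I would handle by noting that every developable space is weakly developable---hence, by~(1), a weakly $w\Delta$-space with weakly $G_\delta^\ast$-diagonal---and that every developable space is submetacompact, a classical consequence obtained by reading a $\theta$-refinement directly off the development. For the converse, I would apply~(1) to obtain a weak development $({\mathcal G}_n)$ of $X$. First I reduce to the case where ${\mathcal G}_{n+1}$ refines ${\mathcal G}_n$ by replacing ${\mathcal G}_n$ with the common refinement of ${\mathcal G}_1,\dots,{\mathcal G}_n$; a short diagonal argument (using the choice $H_i = G_{i,i}$) shows that the weak-development property survives. Next I invoke submetacompactness on each ${\mathcal G}_n$ to produce a $\theta$-refinement $({\mathcal H}_n^m)_m$ and consider the double sequence $({\mathcal H}_n^m)_{n,m}$ re-indexed by $\N$. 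For $x \in X$ and an open neighbourhood $U$ of $x$, the weak-development property yields $n$ such that every intersection $\bigcap_{i\leq n}G_i$ with $G_i \in {\mathcal G}_i$ and $x \in G_i$ lies in $U$; $\theta$-refinability then supplies $m$ such that only finitely many members of ${\mathcal H}_n^m$ contain $x$, each refining some member of ${\mathcal G}_n$ containing $x$, so their union $St(x,{\mathcal H}_n^m)$ is contained in $U$. This exhibits $({\mathcal H}_n^m)$ as a development of $X$.

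For part~(3), the forward implication is immediate since every metrizable space is a paracompact Moore space. For the converse, paracompactness implies submetacompactness, so part~(2) already delivers that $X$ is developable; the weakly $G_\delta^\ast$-diagonal property forces the diagonal of $X \times X$ to be $G_\delta$, which in a $T_1$ space implies Hausdorffness, whereupon the classical theorem that every paracompact Moore space is metrizable concludes the proof.

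The main obstacle I expect is the converse direction of~(2): a weak development only controls intersections along one prescribed sequence of covers, whereas a development requires uniform control of the entire star $St(x,{\mathcal H}_n)$. Submetacompactness, in the form of $\theta$-refinements, is precisely the device that replaces ``every member containing $x$'' by ``finitely many members containing $x$'', so the pointwise shrinking supplied by a weak development can be promoted to the uniform shrinking required by a development.
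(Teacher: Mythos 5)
Your overall architecture coincides with the paper's: part (1) is quoted from \cite{all1}, part (2) reduces to the statement that every submetacompact weakly developable space is developable, and part (3) follows from part (2) together with the classical fact that paracompact developable spaces are metrizable. The paper simply cites \cite{aac} for the key fact behind part (2), whereas you attempt to prove it directly, and it is there that your argument breaks down.

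The flawed step is the claim that, for $x\in X$ and an open neighbourhood $U$ of $x$, ``the weak-development property yields $n$ such that \emph{every} intersection $\bigcap_{i\leq n}G_i$ with $G_i\in{\mathcal G}_i$ and $x\in G_i$ lies in $U$.'' A weak development only guarantees that for each \emph{fixed} branch $(G_i)_i$ with $x\in G_i\in{\mathcal G}_i$ there is some $n$, depending on the branch, with $\bigcap_{i\leq n}G_i\subset U$; when infinitely many members of some ${\mathcal G}_i$ contain $x$, no compactness argument is available to extract a single $n$ working for all branches simultaneously. Indeed, after your reduction to the case where each ${\mathcal G}_n$ is the common refinement of ${\mathcal G}_1,\dots,{\mathcal G}_n$, every member of ${\mathcal G}_n$ containing $x$ \emph{is} such an intersection, so your uniform claim asserts precisely that $St\left(x,{\mathcal G}_n\right)\subset U$, i.e., that $\left({\mathcal G}_n\right)_n$ is already a development. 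This would prove that every weakly developable space is developable with no covering hypothesis at all; and in fact your subsequent use of the $\theta$-refinements ${\mathcal H}_n^m$ never exploits the finite order at $x$, since once every member of ${\mathcal G}_n$ containing $x$ lies in $U$, every member of any refinement containing $x$ lies in $U$ as well. Since weakly developable spaces need not be developable (the paper itself relies on this, e.g., Gruenhage's space is weakly $k$-developable but not a $w\Delta$-space), the uniformization step is irrecoverably false, and submetacompactness must enter the argument earlier and in an essential way, as it does in the proof cited from \cite{aac}. Parts (1) and (3) of your proposal are fine and agree with the paper.
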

\begin{proof}
The first statement is a result of \cite{all1}. The second
statement follows from the first and the fact that every
submetacompact weakly developable space is developable (see
\cite{aac}). The third statement follows from the second and the
fact that every paracompact developable space is metrizable.
\end{proof}
Combining the above theorem with the fact that semi-stratifiable
spaces are metacompact (see \cite[Theorem~2.6]{css}) and have a
$G_{\delta}$-diagonal, and with the fact that stratifiable spaces
are paracompact, we obtain the following result which can be seen
as a generalization of \cite[Corollary~5.12]{ggr}.
\begin{cor}\label{stratif2}
Let $X$ be a regular topological space.
\begin{enumerate}
\item $X$ is developable if and only if it is a semi-stratifiable
weakly $w\Delta$-space. \item $X$ is metrizable if and only if it
is a stratifiable weakly $w\Delta$-space.
\end{enumerate}
\end{cor}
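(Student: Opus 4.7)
The overall plan is to reduce both equivalences to Theorem~\ref{stratif}(2) and (3) respectively, after translating the (semi-)stratifiability hypothesis into the ingredients required there, namely a weakly $G_{\delta}^{\ast}$-diagonal together with either submetacompactness or paracompactness.

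For the ``only if'' direction of (1) I would invoke two classical facts: every developable space is semi-stratifiable, and every developable space is in particular weakly developable, hence a weakly $w\Delta$-space by Theorem~\ref{stratif}(1). The ``only if'' direction of (2) is handled in the same way, using that every metrizable space is stratifiable and developable, and therefore a weakly $w\Delta$-space.

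For the ``if'' direction of (1), let $X$ be a regular semi-stratifiable weakly $w\Delta$-space. Semi-stratifiability furnishes a $G_{\delta}$-diagonal together with metacompactness (and hence submetacompactness) via \cite[Theorem~2.6]{css}. The regularity of $X$ then allows the promotion of the $G_{\delta}$-diagonal sequence to a weakly $G_{\delta}^{\ast}$-diagonal sequence, by the construction recalled in Section~2. All the hypotheses of Theorem~\ref{stratif}(2) are thus in place and $X$ is developable. For the ``if'' direction of (2) the argument is structurally identical: stratifiability of $X$ yields both semi-stratifiability, which again delivers the weakly $G_{\delta}^{\ast}$-diagonal via regularity, and paracompactness, so that Theorem~\ref{stratif}(3) applies and gives metrizability.

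The whole proof is essentially a diagram chase through the definitions and the previously established results; the only step that requires a moment of care, and the only reason regularity appears as a hypothesis, is the passage from a $G_{\delta}$-diagonal to a weakly $G_{\delta}^{\ast}$-diagonal. Once that is granted, Theorem~\ref{stratif} does all the work, so I do not expect any real obstacle beyond keeping track of which hypotheses feed which clause of that theorem.
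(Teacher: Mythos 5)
Your proposal is correct and follows essentially the same route as the paper: the author likewise derives the corollary from Theorem~\ref{stratif}(2) and (3) by using that semi-stratifiable spaces are (sub)metacompact and have a $G_\delta$-diagonal, that stratifiable spaces are paracompact, and that regularity upgrades a $G_\delta$-diagonal sequence to a weakly $G_\delta^\ast$-diagonal sequence. Your extra care in spelling out the ``only if'' directions is harmless and matches the standard facts the paper leaves implicit.
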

\section{Weakly developable spaces versus semi-metrizable spaces}
In this section, we investigate weakly developable spaces and
semi-metrizable spaces.

Let $X$ be a nonempty set and $d: X \times X \longrightarrow \R^+$
be a function. Recall tat the function $d$ is called a distance
function on the set $X$ if
\begin{description}
\item [(W1)] $d\left(x,y\right)=0\Longleftrightarrow x=y$; \item
[(W2)]$d\left(x,y\right)=d\left(y,x\right)$, for every $x,y\in X$.
\end{description}
One usually denotes the ball around $x\in X$ with radius
$\varepsilon>0$ by $$\displaystyle
B\left(x,\varepsilon\right)=\left\{y\in X\mid
d\left(x,y\right)<\varepsilon\right\}$$ and, for $x\in X$ and
$A\subset X$, one puts $$\displaystyle d\left(x,
A\right)=\inf\left\{d\left(x,y\right)\mid y\in A\right\}.$$ A
distance function $d$ on $X$ induces a topology ${\mathcal T}_d$
on $X$ in the following way: $$U\in{\mathcal
T}_d\Longleftrightarrow \forall x\in U, \exists\varepsilon>0,
B\left(x,\varepsilon\right)\subset U.$$ This topology is always
$T_1$. A topological space $X$ is called
\begin{itemize}
\item \emph{symmetrizable} if its topology is equal to ${\mathcal
T}_d$ where $d$ is a distance function on $X$. \item
\emph{semi-metrizable} if it is symmetrizable and
$B\left(x,\varepsilon\right)$ is a neighborhood of $x$, for every
$x\in X$ and $\varepsilon>0$.
\end{itemize}

\begin{rem}
Semi-metrizable spaces are semi-stratifiable and then
submetacompact. Thus, by Theorem~\ref{stratif}, the notions of
weakly developable space and semi-metrizable space are different.
Every weakly developable non developable space is not
semi-metrizable and every semi-metrizable non developable space is
not weakly developable.
\end{rem}

Now, we introduce and study a notion which generalizes both weakly
developable spaces and semi-metrizable spaces.

Let $X$ be a nonempty set and let $d$ be a distance function on
$X$. We say that a family ${\mathcal V}=\left\{V_n^x\mid x\in X,
n\in\N\right\}$ of subsets of $X$ is \emph{adapted} to $d$ if
\begin{displaymath}x\in\bigcap_{k=1}^nV_k^x\subset
B\left(x,2^{-n}\right),\forall x\in X, \forall
n\in\N.\end{displaymath} In this case, we introduce a topology
${\mathcal T}_{d,{\mathcal V}}$ on $X$ in the following way:
\begin{displaymath}U\in{\mathcal T}_{d,{\mathcal V}}\Longleftrightarrow
\forall x\in U, \exists n\in\N, \bigcap_{k=1}^n V_k^x\subset
U.\end{displaymath} The space $\left(X,{\mathcal T}_{d,{\mathcal
V}}\right)$ is $T_1$-topological space and ${\mathcal
T}_{d}\subset{\mathcal T}_{d,{\mathcal V}}$. In the sequel, the
notation $\left(X,d,{\mathcal V}\right)$ stands for the
\emph{weakly symmetric space} $X$. A topological space $X$ will be
said a \emph{weakly symmetrizable space} if its topology is equal
to ${\mathcal T}_{d,{\mathcal V}}$ for some distance function $d$
on $X$ and an adapted family ${\mathcal V}=\left\{V_n^x\mid x\in
X, n\in\N\right\}$ of subsets of $X$ to $d$.

Obviously, every symmetrizable space is weakly symmetrizable. The
converse does not hold in general (see Remark~\ref{rwdsemi}
below). A first examination of weakly symmetrizable spaces yields
the following property inherited from symmetrizable spaces.
\begin{prop} Every weakly symmetric space $\left(X,d,{\mathcal V}\right)$ is
sequential.
\end{prop}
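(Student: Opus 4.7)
The plan is to show that every sequentially closed subset $A$ of $X$ is closed in $\mathcal{T}_{d,\mathcal{V}}$, which is the defining property of a \emph{sequential space}. Equivalently, writing $U = X \setminus A$, I would show that every sequentially open set $U$ is open. Suppose, for contradiction, that $U$ is sequentially open yet not open. By the very definition of $\mathcal{T}_{d,\mathcal{V}}$, there is then some $x \in U$ such that $\bigcap_{k=1}^n V_k^x \not\subset U$ for every $n \in \N$; for each such $n$, pick $y_n \in \bigcap_{k=1}^n V_k^x$ with $y_n \in A$.

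The main step is to verify that $y_n \to x$ in $\mathcal{T}_{d,\mathcal{V}}$. This rests on the monotonicity of the base sets: whenever $n \geq m$, we have $\bigcap_{k=1}^n V_k^x \subset \bigcap_{k=1}^m V_k^x$, so for every fixed $m$ the tail $(y_n)_{n \geq m}$ lies inside $\bigcap_{k=1}^m V_k^x$. But every open neighborhood $W$ of $x$ contains some $\bigcap_{k=1}^m V_k^x$ by the definition of $\mathcal{T}_{d,\mathcal{V}}$, so $y_n$ is eventually in $W$, and therefore $y_n \to x$. Since $(y_n) \subset A$ while $x \notin A$, this contradicts the assumed sequential closedness of $A$, and $U$ must in fact be open.

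I do not anticipate any serious technical obstacle. The whole argument hinges on one delicate observation that deserves emphasis: although the sets $\bigcap_{k=1}^n V_k^x$ need not themselves be neighborhoods of $x$ in $\mathcal{T}_{d,\mathcal{V}}$ (so first-countability is not automatic from the definition), their monotonicity in $n$ together with the way open sets are built already forces convergence of any sequence that picks one element from each successive intersection. This is precisely the mechanism that carries the sequentiality of symmetrizable spaces over to the weakly symmetric setting, and it uses the adaptedness condition only implicitly through the topology it defines.
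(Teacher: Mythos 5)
Your proposal is correct and follows essentially the same route as the paper: both arguments take a non-open (sequentially open, \resp non-closed) set, extract a point $x$ witnessing the failure of openness, pick $y_n\in\bigcap_{k=1}^n V_k^x$ in the complement, and use the nestedness of the intersections together with the definition of ${\mathcal T}_{d,{\mathcal V}}$ to conclude $y_n\to x$. The paper phrases this via sequentially closed sets rather than sequentially open ones, but the content is identical.
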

\begin{proof} Let $A$ be a non closed subset of $X$. Then, there exists $x\notin A$ such that
$\bigcap_{k=1}^n V_k^x\cap A\not=\emptyset$, for every $n\in\N$.
Pick $x_n\in\bigcap_{k=1}^n V_k^x\cap A$ and claim that
$\left(x_n\right)_n$ converges to $x$. Let $U$ be an open
neighborhood of $x$. Let $m_0\in\N$ be such that
$\bigcap_{k=1}^{m_0} V_k^x\subset U$. It follows that $x_n\in U$
for every $n\geq m_0$ and thus $\left(x_n\right)_n$ converges to
$x$. Consequently the subset $A$ is not sequentially closed.
\end{proof}

Let $\left(X,d,{\mathcal V}\right)$ be a weakly symmetric space. A
sequence $\left(x_n\right)_n$ in $X$ will be said \emph{${\mathcal
V}$-converging} to $x\in X$ and we write
\emph{$x_n\stackrel{{\mathcal V}}{\longrightarrow}x$} if
\begin{displaymath}\left\{x_n\mid n\in \N\right\}\setminus
\cap_{k=1}^m V_k^x \textrm{ is finite, for each }
m\in\N.\end{displaymath} A sequence $\left(x_n\right)_n$ of
elements of $X$ is said to be \emph{injective} if the set
$\left\{x_n\mid n\in N\right\}$ is infinite.

The following result is easy to prove.
\begin{lem}\label{caract1}
Let $\left(X,d,{\mathcal V}\right)$ be a weakly symmetric space.
If $\left(x_n\right)_n$ is an injective sequence in $X$ such that
$x_n\stackrel{{\mathcal V}}{\longrightarrow}x$, then
$x_n\longrightarrow x$ and
$\lim\limits_{n\to+\infty}d\left(x_n,x\right)=0$.
\end{lem}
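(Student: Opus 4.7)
The plan is to extract the two conclusions directly from the definitions, with the injectivity of $(x_n)$ serving as the bridge from the hypothesis, which is a finiteness statement about the \emph{values} $\{x_n\mid n\in\N\}$, to the conclusions, which are statements about the \emph{indices} $n$. The adaptedness inclusion $\bigcap_{k=1}^m V_k^x\subset B(x,2^{-m})$ is what converts topological neighborhoods into distance bounds.

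First, for the topological convergence in $(X,\mathcal{T}_{d,\mathcal{V}})$: given an open neighborhood $U$ of $x$, the definition of $\mathcal{T}_{d,\mathcal{V}}$ provides $m\in\N$ with $\bigcap_{k=1}^m V_k^x\subset U$. The $\mathcal{V}$-convergence hypothesis then yields that the set of values $F_m:=\{x_n\mid n\in\N\}\setminus\bigcap_{k=1}^m V_k^x$ is finite. Injectivity of $(x_n)$ ensures that each $y\in F_m$ is attained by at most finitely many indices, so the set $\{n\mid x_n\notin U\}\subset\{n\mid x_n\in F_m\}$ is finite, giving $x_n\to x$. Second, the same argument with $U$ replaced by $B(x,2^{-m})$, legitimised by the adaptedness inclusion $\bigcap_{k=1}^m V_k^x\subset B(x,2^{-m})$, delivers $d(x_n,x)<2^{-m}$ for all but finitely many $n$, whence $d(x_n,x)\to 0$ as $m$ was arbitrary.

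The main (and essentially only) obstacle is the passage from ``finitely many bad values'' to ``finitely many bad indices''. Without the injectivity hypothesis, the $\mathcal{V}$-convergence condition would be compatible with a fixed $y\neq x$ being hit by infinitely many indices: the single value $y$ would merely sit in the finite set $F_m$, yet the indices at which it occurs would prevent both conclusions. Once injectivity rules this out, everything else is a direct translation of the definitions — which is no doubt why the author declares the result easy.
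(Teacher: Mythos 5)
Your proof is correct and complete; the paper itself offers no argument for this lemma (it is merely declared ``easy to prove''), and yours is the natural one: openness of $U$ at $x$ gives $\bigcap_{k=1}^{m}V_k^x\subset U$ for some $m$, $\mathcal{V}$-convergence makes the set of \emph{values} outside $\bigcap_{k=1}^{m}V_k^x$ finite, injectivity converts finitely many bad values into finitely many bad indices, and the adaptedness inclusion $\bigcap_{k=1}^{m}V_k^x\subset B\left(x,2^{-m}\right)$ turns the same argument into the estimate $d\left(x_n,x\right)<2^{-m}$ eventually. One caveat worth recording: the paper literally defines an injective sequence as one whose range $\left\{x_n\mid n\in\N\right\}$ is infinite, which is strictly weaker than the finite-to-one property your key step invokes. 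Under that literal definition the step --- and indeed the lemma --- fails: a sequence alternating between a fixed point $y\not=x$ (with $y\notin\bigcap_{k=1}^{m_0}V_k^x$ for some $m_0$) and a genuinely one-to-one sequence $\mathcal{V}$-converging to $x$ still has infinite range and still $\mathcal{V}$-converges to $x$, since $y$ contributes only one element to each exceptional set, yet it cannot converge to $x$ because $X\setminus\left\{y\right\}$ is an open neighborhood of $x$ in a $T_1$ space. Your reading (each value attained by at most finitely many indices) is clearly the intended one, and with it the proof goes through without further change.
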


The following lemma further clarifies the relation of the adapted
family ${\mathcal V}$ and the distance function $d$ to the
topology of a weakly symmetrizable space.

\begin{lem}\label{caract2}
Let $\left(X,d,{\mathcal V}\right)$ be a weakly symmetric space.
We have the following statements:
\begin{enumerate}\item If every subset consisting of a
converging sequence and its limit is closed (in particular, if $X$
is Hausdorff), then for every sequence $\left(x_n\right)_n$ in
$X$, $x_n\longrightarrow x$ implies $x_n\stackrel{{\mathcal
V}}{\longrightarrow}x$. \item ${\mathcal T}_{d}={\mathcal
T}_{d,{\mathcal V}}$ if and only if, for every sequence
$\left(x_n\right)_n$ in $X$,
$\lim\limits_{n\to+\infty}d\left(x_n,x\right)=0$ implies
$x_n{\longrightarrow}x$.
\end{enumerate}
\end{lem}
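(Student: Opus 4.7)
I would argue by contradiction. Assuming $x_n\longrightarrow x$ in $\mathcal{T}_{d,\mathcal{V}}$ but $x_n$ fails to $\mathcal{V}$-converge to $x$, some $m\in\N$ would make the set $\{x_n\mid n\in\N\}\setminus\bigcap_{k=1}^m V_k^x$ infinite, and I would extract an injective subsequence $(y_j)_j$ lying outside $\bigcap_{k=1}^m V_k^x$ (hence $y_j\neq x$) with $y_j\longrightarrow x$. By the hypothesis applied to each tail sequence, the set $F_N:=\{y_j\mid j\geq N\}\cup\{x\}$ would be closed for every $N\in\N$. The crucial step is to verify that $U_N:=(X\setminus F_N)\cup\{x\}$ is $\mathcal{T}_{d,\mathcal{V}}$-open: at a point $y\in X\setminus F_N$, the openness of $X\setminus F_N$ provides an $n$ with $\bigcap_{k=1}^n V_k^y\subset X\setminus F_N\subset U_N$; and at $y=x$, one may take $n=m$, because $\bigcap_{k=1}^m V_k^x$ contains $x$ (by the adapted condition) and misses every $y_j$ by construction. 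Once $U_N$ is established as an open neighborhood of $x$, the fact that $y_j\notin U_N$ for all $j\geq N$ would contradict $y_j\longrightarrow x$.

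\textbf{Plan for (2).} I would use that $\mathcal{T}_d\subset\mathcal{T}_{d,\mathcal{V}}$ is already on record. For the forward direction, if the two topologies coincide, then every $\mathcal{T}_{d,\mathcal{V}}$-open neighborhood of $x$ is $\mathcal{T}_d$-open and therefore contains some ball $B(x,\varepsilon)$; hence $d(x_n,x)\longrightarrow 0$ puts $x_n$ in this neighborhood eventually. For the converse, I would fix $U\in\mathcal{T}_{d,\mathcal{V}}$ with $x\in U$ and show that $B(x,\varepsilon)\subset U$ for some $\varepsilon>0$ by contradiction: otherwise I would pick $y_k\in B(x,1/k)\setminus U$ for each $k$, so that $d(y_k,x)\longrightarrow 0$; the hypothesis would then give $y_k\longrightarrow x$ in $\mathcal{T}_{d,\mathcal{V}}$, forcing $y_k\in U$ eventually and contradicting $y_k\notin U$. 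Hence every $\mathcal{T}_{d,\mathcal{V}}$-open set is $\mathcal{T}_d$-open.

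\textbf{Main obstacle.} The delicate step is establishing openness of $U_N$ in part (1). The difficulty is that $\bigcap_{k=1}^m V_k^x$ is not a priori open — the sets $V_k^x$ are arbitrary — but only a pseudo-base element at $x$ arising from the very definition of $\mathcal{T}_{d,\mathcal{V}}$. The hypothesis contributes precisely the missing ingredient: it makes $X\setminus F_N$ open, and adjoining the single point $x$ together with the pseudo-base condition at $x$ with index $n=m$ suffices to turn $U_N$ into a genuine neighborhood of $x$. The parenthetical ``in particular, if $X$ is Hausdorff'' comes for free: $X$ is sequential, and in a sequential Hausdorff space uniqueness of sequential limits makes the range of any convergent sequence together with its limit sequentially closed, hence closed.
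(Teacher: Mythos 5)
Your proposal is correct and follows essentially the same route as the paper: for (1) the paper likewise shows that the complement of the set of terms lying outside $\bigcap_{k=1}^{m}V_k^x$ is open, using the closedness of the convergent sequence together with its limit at points $y\neq x$ and the pseudo-base element $\bigcap_{k=1}^{m}V_k^x$ at $x$ itself, and then derives the same contradiction with convergence. For (2) the paper simply declares the statement well known; the argument you supply is the standard one and is correct.
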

\begin{proof}
The second statement being well known, we prove only the first
statement. Let $\left(x_n\right)_n$ be a converging sequence to
$x$ and suppose
\begin{displaymath}
A=\left\{x_n\mid n\in \N\right\}\setminus \bigcap_{k=1}^{m_0}
V_k^x \end{displaymath}
is infinite for some $m_0\in\N$. The set
$A$ is a converging sub-sequence of $\left(x_n\right)_n$ to
$x\notin A$ and then $A$ is not closed. On the other hand, we will
prove that $X\setminus A$ is open which is impossible. Let $y\in
X\setminus A$. If $y=x$, we have
$x\in\bigcap_{k=1}^{m_0}V_k^x\subset X\setminus A$. If $y\not=x$,
then $y\in X\setminus A\cup\left\{x\right\}$. Since
$A\cup\left\{x\right\}$ is closed, let $m_y\in\N$ be such that
$y\in\bigcap_{k=1}^{m_y} V_k^x\subset X\setminus
A\cup\left\{x\right\}\subset X\setminus A$.
\end{proof}

\begin{thm}
Let $\left(X,d,{\mathcal V}\right)$ be a weakly symmetric space
and consider the following conditions:
\begin{enumerate}
\item For every $x\in X$, the family $\left\{\bigcap_{k=1}^n
V_k^x\mid n\in\N\right\}$ is a local base at $x$; \item $X$ is
first countable space; \item $X$ is Fr\'echet space.
\end{enumerate}
Then $(1)\Longrightarrow (2)\Longrightarrow (3)$. If in addition,
for every injective sequence $\left(x_n\right)_n$ in $X$,
$x_n\longrightarrow x$ if and only if $x_n\overset{{\mathcal
V}}{\longrightarrow}x$, then $(3)\Longrightarrow (1)$.
\end{thm}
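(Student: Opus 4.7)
The first two implications are routine. For $(1)\Rightarrow(2)$, the family $\{\bigcap_{k=1}^n V_k^x\mid n\in\N\}$ is by hypothesis a countable local base at $x$, so $X$ is first countable. For $(2)\Rightarrow(3)$, I invoke the classical fact that every first countable space is Fr\'echet.

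The real content lies in $(3)\Rightarrow(1)$. The crucial preliminary observation is that, directly from the definition of $\mathcal{T}_{d,\mathcal{V}}$, every open neighborhood of $x$ already contains $\bigcap_{k=1}^n V_k^x$ for some $n$; hence condition (1) reduces to showing that each of the sets $\bigcap_{k=1}^n V_k^x$ is itself a neighborhood of $x$.

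I would argue this by contradiction. Suppose that for some $x\in X$ and some $n_0\in\N$, the set $W=\bigcap_{k=1}^{n_0}V_k^x$ fails to be a neighborhood of $x$. Then $x\in\overline{X\setminus W}$, and since $X$ is Fr\'echet there is a sequence $(y_m)_m$ in $X\setminus W$ with $y_m\longrightarrow x$. Now split into two cases. If the image $\{y_m\mid m\in\N\}$ is finite, some value $y$ is attained infinitely often; the corresponding constant subsequence converges both to $y$ and to $x$, forcing $y=x$ by the $T_1$ axiom. This contradicts $y\notin W$ while $x\in W$ (the latter from the adapted family condition). If instead the image is infinite, extract an injective subsequence $(y_{m_l})_l$, which still converges to $x$. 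The additional hypothesis then gives $y_{m_l}\overset{\mathcal{V}}{\longrightarrow}x$, so by definition $\{y_{m_l}\mid l\in\N\}\setminus W$ is finite; but since every $y_{m_l}$ lies outside $W$, this set coincides with $\{y_{m_l}\mid l\in\N\}$, which is infinite by injectivity---a contradiction.

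The main obstacle is conceptual rather than technical: one must recognise that condition (1) really amounts to each $\bigcap_{k=1}^n V_k^x$ being a neighborhood of $x$, the ``every neighborhood contains some $\bigcap_{k=1}^n V_k^x$'' half being automatic from the construction of $\mathcal{T}_{d,\mathcal{V}}$. Once this is seen, Fr\'echet-ness supplies the incriminating sequence and the bidirectional convergence hypothesis (which strengthens Lemma~\ref{caract1} by also giving its converse on injective sequences) forces it to be eventually inside $W$, contradicting its very choice.
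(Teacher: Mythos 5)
Your proposal is correct and follows essentially the same route as the paper: assume some $\bigcap_{k=1}^{m_0}V_k^x$ is not a neighborhood of $x$, use the Fr\'echet property to produce an injective sequence outside this set converging to $x$, and derive a contradiction from the hypothesis that injective convergence implies ${\mathcal V}$-convergence. Your treatment is in fact slightly more careful than the paper's, since you justify explicitly (via $T_1$ and the finite-range case) why the Fr\'echet sequence may be taken injective, a point the paper passes over.
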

\begin{proof}
Assume that $X$ is a Fr\'echet space. Suppose that for some $x\in
X$ and $m_0\in\N$, the set $\bigcap_{k=1}^{m_0} V_k^x$ is not a
neighborhood of $x$. For every neighborhood $V$ of $x$, choose
$x_V\in V\setminus\bigcap_{k=1}^{m_0} V_k^x$ and put
$A=\left\{x_V\mid V\textrm{ is a neighborhood of } x\right\}$.
Since $X$ is Fr\'echet, there exists an injective sequence
$\left(x_n\right)_n$ in $A$ converging to $x$. It follows that
$\left(x_n\right)_n$ is ${\mathcal V}$-converging to $x$ and then,
$\bigcap_{k=1}^{m_0} V_k^x$ contains infinitely many elements of
$A$. Contradiction.
\end{proof}
For a subset $A$ of $X$, we define the ${\mathcal V}$-closure
$C_{{\mathcal V}}\left(A\right)$ of $A$ as follows:
\begin{displaymath}C_{{\mathcal V}}\left(A\right)=A\cup\left\{x\in X\mid
\exists\left(x_n\right)_n\subset A, \left(x_n\right)_n \textrm{
injective and } x_n\overset{{\mathcal
V}}{\longrightarrow}x\right\}.\end{displaymath} Clearly,
$C_{{\mathcal V}}\left(A\right)\subset\overline{A}$ where
$\overline{A}$ is the closure of $A$ with respect to the topology
${\mathcal T}_{d,{\mathcal V}}$. The converse holds under
additional conditions.
\begin{thm}
Let $\left(X,d,{\mathcal V}\right)$ be a weakly symmetric space.
The following two conditions are equivalents:
\begin{enumerate}
\item For every $A\subset X$, we have $\overline{A}=C_{{\mathcal
V}}\left(A\right)$, \item $X$ is Fr\'echet space, and for every
injective sequence $\left(x_n\right)_n$ in $X$,
$x_n\longrightarrow x$ if and only if $x_n\overset{{\mathcal
V}}{\longrightarrow}x$. \end{enumerate}
\end{thm}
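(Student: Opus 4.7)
The plan is to handle the two implications separately, leveraging Lemma~\ref{caract1} throughout.

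For $(1)\Longrightarrow(2)$, I would first obtain the Fréchet property directly: given $A\subset X$ and $x\in\overline{A}\setminus A$, hypothesis (1) gives an injective sequence in $A$ that is $\mathcal{V}$-converging to $x$, and Lemma~\ref{caract1} promotes this to ordinary convergence. The substantive part is the equivalence of injective convergence. The $(\Leftarrow)$ direction is exactly Lemma~\ref{caract1}. For $(\Rightarrow)$, I would argue by contradiction: suppose $(x_n)$ is injective with $x_n\to x$ but not $\mathcal{V}$-converging, so there is $m_0\in\N$ with $A:=\{x_n\mid n\in\N\}\setminus\bigcap_{k=1}^{m_0}V_k^x$ infinite. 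After discarding the at most one index for which $x_n=x$, we have $x\notin A$; since $A$ is an infinite subset of a convergent injective sequence, $x\in\overline{A}$. By (1), there is an injective sequence $(y_j)\subset A$ with $y_j\overset{\mathcal{V}}{\longrightarrow}x$, forcing all but finitely many $y_j$ to lie in $\bigcap_{k=1}^{m_0}V_k^x$. But every $y_j\in A$ is disjoint from $\bigcap_{k=1}^{m_0}V_k^x$, a contradiction.

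For $(2)\Longrightarrow(1)$, the inclusion $C_{\mathcal{V}}(A)\subset\overline{A}$ is already noted before the statement, so only $\overline{A}\subset C_{\mathcal{V}}(A)$ needs proof. Fix $x\in\overline{A}\setminus A$. Since $X$ is Fréchet, pick a sequence $(x_n)\subset A$ with $x_n\to x$. The sequence avoids $x$, so I must extract an injective subsequence: if $\{x_n\}$ were finite, the $T_1$ hypothesis would force a constant subsequence converging only to its own value, contradicting $x\notin\{x_n\}$; hence $\{x_n\}$ is infinite and a straightforward inductive choice of distinct terms yields an injective subsequence $(x_{n_k})$, still converging to $x$. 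By the equivalence in (2), $x_{n_k}\overset{\mathcal{V}}{\longrightarrow}x$, and thus $x\in C_{\mathcal{V}}(A)$.

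The main obstacle is the $(\Rightarrow)$ direction of the convergence equivalence in $(1)\Longrightarrow(2)$, because the subset $A$ extracted from the sequence is no longer a sequence indexed by $\N$ but a set; the key observation is that $A$ inherits $x$ as a limit point (so $x\in\overline{A}$) while simultaneously being, by construction, entirely outside $\bigcap_{k=1}^{m_0}V_k^x$, making any $\mathcal{V}$-converging subsequence in $A$ instantly impossible. The other technical care point is handling the possibility that $x$ coincides with a term of the injective sequence, which is harmless since it happens at most once.
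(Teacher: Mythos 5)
Your proof is correct, and its overall skeleton matches the paper's: both implications are handled by contradiction around a fixed index $m_0$ and the set of terms lying outside $\bigcap_{k=1}^{m_0}V_k^x$, with Lemma~\ref{caract1} supplying the ``$\mathcal{V}$-convergence implies convergence'' half. The one genuine divergence is your derivation of the Fr\'echet property in $(1)\Longrightarrow(2)$: the paper proves the stronger statement that $\left\{\bigcap_{k=1}^{n}V_k^x\mid n\in\N\right\}$ is a local base at each $x$ (invoking the chain local base $\Rightarrow$ first countable $\Rightarrow$ Fr\'echet from the preceding theorem), via a contradiction with $A=X\setminus\bigcap_{k=1}^{m_0}V_k^x$; you instead verify the Fr\'echet property directly from its definition, reading a convergent sequence off $\overline{A}=C_{\mathcal{V}}(A)$ and Lemma~\ref{caract1}. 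Your route is shorter and self-contained, while the paper's yields first countability of $X$ as a by-product. You also supply two details the paper elides: the justification that an infinite subset of the terms of a convergent sequence has $x$ in its closure, and the $T_1$ argument behind the ``without loss of generality the sequence is injective'' step in $(2)\Longrightarrow(1)$ (note that with the paper's convention that ``injective'' means infinite range, no subsequence extraction is even needed once the range is shown to be infinite). No gaps.
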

\begin{proof}
$(1)\Longrightarrow (2)$: To prove that $X$ is Fr\'echet, it
suffices to prove that for every $\in X$, the family
$\left\{\bigcap_{k=1}^n V_k^x\mid n\in\N\right\}$ is a local base
at $x$. As in the above theorem, suppose that for some $x\in X$
and $m_0\in\N$, the set $\bigcap_{k=1}^{m_0} V_k^x$ is not a
neighborhood of $x$. Then, $x\in \overline{A}$ where $A=X\setminus
\bigcap_{k=1}^{m_0} V_k^x$. Since $\overline{A}=C_{{\mathcal
V}}\left(A\right)$ and $x\notin A$, there exists an injective
sequence $\left(x_n\right)_n$ in $A$ such that
$x_n\overset{{\mathcal V}}{\longrightarrow}x$. Thus,
$\bigcap_{k=1}^{m_0} V_k^x$ contains an infinitely many elements
of $A$. Contradiction. To prove the second assertion, let
$\left(x_n\right)_n$ be an injective sequence in $X$ such that
$x_n\longrightarrow x$. Without loss of generality, we may assume
that $x_n\not=x$, for every $n\in\N$. If $\left(x_n\right)_n$ is
not ${\mathcal V}$-converging to $x$, then, for some $m_0\in\N$,
the set
\begin{displaymath}\left\{x_n\mid n\in \N\right\}\setminus
\cap_{k=1}^{m_0} V_k^x\end{displaymath} is infinite. Thus, there
is a subsequence $\left(x_{n_k}\right)_k$ such that
$x_{n_k}\notin\cap_{k=1}^{m_0} V_k^x$, for every $k\in\N$. Put
$A=\left\{x_{n_k}\mid k\in \N\right\}$. Clearly $x\notin A$ and
$x\in\overline{A}=C_{{\mathcal V}}\left(A\right)$. Then, there
exists an injective sequence $\left(y_n\right)_n$ in $A$ which is
${\mathcal V}$-converging to $x$. Thus, $\cap_{k=1}^{m_0} V_k^x$
contains an infinitely many elements of $A$. Contradiction.

\noindent $(2)\Longrightarrow (1)$: Let $A\subset X$ and $x\in
\overline{A}$. If $x\in A$, then $x\in C_{{\mathcal
V}}\left(A\right)$. Suppose $x\notin A$ and since $X$ is a
Fr\'echet space, there exists a sequence $\left(x_n\right)_n$ in
$A$ converging to $x$. Without loss of generality, we may assume
that $\left(x_n\right)_n$ is injective. Then, by assumption, the
sequence $\left(x_n\right)_n$ is ${\mathcal V}$-converging to $x$
which completes the proof.
\end{proof}

A weakly symmetric space $\left(X,d,{\mathcal V}\right)$ will be
said \emph{weakly semi-metric} if in addition
$\overline{A}=C_{{\mathcal V}}\left(A\right)$ for every $A\subset
X$, and the following condition holds:
\begin{description}
\item[(WS-M)] if $x\in\bigcap_n V_n^{x_n}$, then $\bigcap_n
V_n^{x_n}=\left\{x\right\}$ and whenever $x_n\in
\bigcap_{k=0}^nV_k^{x_k}$ for every $n$, the sequence
$\left(x_n\right)_n$ converges to $x$.
\end{description}
A space $X$ will be called a \emph{weakly semi-metrizable space}
if its topology is equal to ${\mathcal T}_{d,{\mathcal V}}$ of a
weakly semi-metric space $\left(X,d,{\mathcal V}\right)$. From the
definition, we have the following result:
\begin{prop} Every weakly semi-metrisable space is first countable and has a $G_{\delta}$-diagonal.
\end{prop}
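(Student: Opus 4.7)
The plan is to separate the two conclusions and use the two preceding theorems of the section for first countability, then invoke Ceder's characterization of a $G_\delta$-diagonal together with condition (WS-M) for the second assertion.

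First, I would deduce first countability. Since $(X,d,\mathcal{V})$ is weakly semi-metric, by definition $\overline{A}=C_{\mathcal{V}}(A)$ for every $A\subset X$. By the preceding characterization theorem, this forces $X$ to be a Fr\'echet space, and $x_n\longrightarrow x$ if and only if $x_n\overset{\mathcal{V}}{\longrightarrow}x$ for every injective sequence $(x_n)_n$ in $X$. These two properties together are exactly the hypotheses under which the theorem before it yields the implication $(3)\Rightarrow (1)$, so $\left\{\bigcap_{k=1}^n V_k^x\mid n\in\mathbb{N}\right\}$ is a countable local base at each $x\in X$, which is the desired first countability.

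Next, for the $G_\delta$-diagonal, I would construct a $G_\delta$-diagonal sequence and invoke Ceder's theorem. Since each $\bigcap_{k=1}^n V_k^x$ is now known to be a neighborhood of $x$, set $W_n^x:=\operatorname{int}\!\bigl(\bigcap_{k=1}^n V_k^x\bigr)$; this is open and contains $x$. Define $\mathcal{G}_n:=\{W_n^x\mid x\in X\}$, which is an open cover of $X$ for every $n$. The claim to prove is that $\bigcap_n St(x,\mathcal{G}_n)=\{x\}$ for every $x\in X$.

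The verification is where (WS-M) enters. Suppose $z\in\bigcap_n St(x,\mathcal{G}_n)$. For each $n$ choose $y_n\in X$ with $x,z\in W_n^{y_n}\subset V_n^{y_n}$. In particular $x\in V_n^{y_n}$ for every $n$, so setting $x_n:=y_n$ we have $x\in\bigcap_n V_n^{x_n}$. Condition (WS-M) then gives $\bigcap_n V_n^{x_n}=\{x\}$, and since $z\in V_n^{y_n}=V_n^{x_n}$ for every $n$, we conclude $z=x$, as required. The main obstacle is recognizing that (WS-M) is tailored precisely to make the star around $x$ in the covers $\mathcal{G}_n$ shrink to $\{x\}$; once the indexing $x_n=y_n$ is lined up, the argument is essentially a one-line application of (WS-M).
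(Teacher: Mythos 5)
Your proof is correct and matches the intended argument: the paper states this proposition without proof (``From the definition, we have the following result''), and the details you supply are exactly the ones the definition is designed to yield. First countability follows, as you say, by chaining the two preceding theorems (the hypothesis $\overline{A}=C_{\mathcal V}(A)$ gives the Fr\'echet property together with the equivalence of convergence and $\mathcal V$-convergence for injective sequences, which in turn makes $\left\{\bigcap_{k=1}^{n}V_k^x\mid n\right\}$ a local base at $x$), and the $G_\delta$-diagonal follows from Ceder's characterization applied to the open covers $\mathcal G_n=\{\operatorname{int}(\bigcap_{k=1}^{n}V_k^x)\mid x\in X\}$, with condition (WS-M) collapsing $\bigcap_n St(x,\mathcal G_n)$ to $\{x\}$.
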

Like for developable spaces (see \cite{lcds}), the following
result gives some properties of the distance function defined on a
weakly developable space.
\begin{thm}\label{wdsemi} Every weakly developable space is
weakly semi-metrizable. Moreover, the compatible distance function
$d$ and its adapted family $\left\{V_n^x\mid x\in X,
n\in\N\right\}$ satisfy the following additional conditions: for
every $x\in X$ and $n\in\N$, the set $V_n^x$ is open and
$d\left(y_1,y_2\right)<2^{-n}$, for every $y_1$ and $y_2$ in
$V_{n}^x$.
\end{thm}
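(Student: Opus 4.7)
The plan is to exhibit an explicit weakly semi-metric structure derived from a weak development. Starting from a weak development $({\mathcal G}_n)_n$ of $X$, I would, as in the proof of the preceding proposition, refine it so that ${\mathcal G}_1 = \{X\}$ and ${\mathcal G}_{n+1}$ refines ${\mathcal G}_n$ for every $n \in \N$. For each $x \in X$ and $n \in \N$, choose $V_n^x \in {\mathcal G}_n$ with $x \in V_n^x$; these sets are automatically open since the ${\mathcal G}_n$ are open covers. Next define
$$d(x,y) = \inf\{2^{-n-1} \mid \exists\, G \in {\mathcal G}_n,\; \{x,y\} \subset G\}$$
with the convention $d(x,x)=0$. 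Symmetry is immediate, and the shift by one in the exponent is chosen so that $y_1, y_2 \in V_n^x \in {\mathcal G}_n$ directly yields $d(y_1, y_2) \leq 2^{-n-1} < 2^{-n}$; this simultaneously produces the strict inequality in the statement and the adapted-family inclusion $\bigcap_{k=1}^n V_k^x \subset B(x, 2^{-n})$.

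The main obstacle is showing $d(x,y) > 0$ whenever $x \neq y$. Because ${\mathcal G}_{n+1}$ refines ${\mathcal G}_n$, the set $S = \{n \mid \exists\, G \in {\mathcal G}_n,\; \{x,y\} \subset G\}$ is downward closed in $\N$, hence either finite or equal to $\N$. In the latter case, selecting for each $n$ some $G_n \in {\mathcal G}_n$ containing both $x$ and $y$ produces a valid weak-development choice at $x$; weak developability then forces $(\bigcap_{k \leq n} G_k)_n$ to be a local base at $x$, and as $y$ lies in every one of these sets, the $T_1$ axiom of $X$ is violated. Hence $S$ is finite, $d$ is a genuine distance function, and ${\mathcal V} = \{V_n^x \mid x \in X,\ n \in \N\}$ is adapted to $d$.

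The remaining verifications reduce to instances of the weak-development property applied to sequences of the form $(V_n^{x_n})_n$. For the identification of ${\mathcal T}_{d,{\mathcal V}}$ with the original topology of $X$, observe that each $\bigcap_{k \leq n} V_k^x$ is open in $X$ and that $(\bigcap_{k \leq n} V_k^x)_n$ is a local base at $x$ by weak developability, so the two topologies share a countable local base at each point. For condition (WS-M), if $x \in \bigcap_n V_n^{x_n}$ then $(V_n^{x_n})_n$ is a valid weak-development choice at $x$, so $(\bigcap_{k \leq n} V_k^{x_k})_n$ is a local base at $x$; its total intersection $\bigcap_n V_n^{x_n}$ then equals $\{x\}$ by $T_1$, and any sequence with $x_n \in \bigcap_{k \leq n} V_k^{x_k}$ converges to $x$ because it eventually enters every basic neighborhood.

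Finally, the closure identity $\overline{A} = C_{\mathcal V}(A)$ follows from two observations: first countability (the $\bigcap_{k \leq n} V_k^x$ form a countable local base at $x$) makes $X$ a Fr\'echet space, so every $x \in \overline{A} \setminus A$ is the limit of an injective sequence in $A$; and each $\bigcap_{k \leq m} V_k^x$ being an open neighborhood of $x$ ensures that ordinary convergence to $x$ automatically entails ${\mathcal V}$-convergence to $x$. The only real technical point is the positivity of $d$ off the diagonal; once that is in hand, every other piece of the statement is a direct translation of the weak-development property into the language of the triple $(X, d, {\mathcal V})$.
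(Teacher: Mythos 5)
Your proposal is correct and follows essentially the same construction as the paper: the distance function is defined from the weak development and each $V_n^x$ is a chosen member of ${\mathcal G}_n$ containing $x$; you simply supply in detail the verifications the paper dismisses as ``clear''. Your shift of the exponent to $2^{-n-1}$ is in fact a sensible repair, since the paper's own formula $d(x,y)=\inf\{2^{-n}\mid \exists G_n\in{\mathcal G}_n,\ x\in G_n \textrm{ and } y\in G_n\}$ only yields $d(y_1,y_2)\le 2^{-n}$ for $y_1,y_2\in V_n^x$, not the strict inequality asserted in the statement.
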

\begin{proof}
Let $X$ be a weakly developable space and let $\left({\mathcal
G}_n\right)$ be its weak development. For every $x\in X$ and $y\in
X$, put
\begin{displaymath}d\left(x,y\right)=\inf\left\{2^{-n}\mid
\exists G_n\in{\mathcal G}_n, x\in G_n{\textrm{ and }} y\in
G_n\right\}.\end{displaymath} It is clear that $d$ is a distance
function on $X$. On the other hand, for every $x\in X$ and
$n\in\N$, fix $V_n^x\in{\mathcal G}_n$ such that $x\in V_n^x$. It
is also clear that $\left\{V_n^x\mid x\in X, n\in\N\right\}$ is
the required family.
\end{proof}
Recall the following axiom introduced by Wilson for symmetric
spaces. Let $\left(X,d\right)$ be a symmetric space. The space
$\left(X,d\right)$ is said to be satisfying (W3) if
\begin{description}
\item[(W3)]whenever $\left(x_n\right)_n$ is a sequence in $X$ and
$x,y\in X$, we have
$\lim\limits_{n\to+\infty}d\left(x_n,x\right)=\lim\limits_{n\to+\infty}d\left(x_n,y\right)=0$
implies $x=y$.
\end{description}
Every semi-metrizable Hausdorff space satisfies (W3). Moreover, we
have the following.
\begin{thm}\label{semisemi}
Every semi-metrizable space satisfying (W3) is weakly
semi-metrizable.
\end{thm}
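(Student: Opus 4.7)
The plan is to reuse the given semi-metric $d$ itself and to define the adapted family by $V_n^x := B(x, 2^{-n})$ for every $x \in X$ and $n \in \N$. Since the balls are nested, $\bigcap_{k=1}^n V_k^x = B(x, 2^{-n})$, so the adapted-family condition $x \in \bigcap_{k=1}^n V_k^x \subset B(x, 2^{-n})$ holds trivially. The inclusion $\mathcal{T}_d \subset \mathcal{T}_{d,\mathcal{V}}$ is automatic. Conversely, if $U \in \mathcal{T}_{d,\mathcal{V}}$ and $x \in U$, then $B(x, 2^{-n}) \subset U$ for some $n$; since $B(x, 2^{-n})$ is a $\mathcal{T}_d$-neighborhood of $x$ by semi-metrizability, $U$ contains a $\mathcal{T}_d$-open set around $x$. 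Hence the topology of $X$ coincides with $\mathcal{T}_{d,\mathcal{V}}$.

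I would then show that $(X, d, \mathcal{V})$ is weakly semi-metric. To obtain $\overline{A} = C_{\mathcal{V}}(A)$ for every $A \subset X$, I would invoke the characterization established in the previous theorem: it suffices that (a) $X$ is Fr\'echet and (b) for every injective sequence, $x_n \longrightarrow x$ iff $x_n \overset{\mathcal{V}}{\longrightarrow} x$. Semi-metrizability yields first countability (the balls $B(x, 1/n)$ form a local base at $x$), hence the Fr\'echet property. In a semi-metric space topological convergence is equivalent to $d(x_n, x) \to 0$, and for an injective sequence this is, with our choice of $V_n^x$, precisely $\mathcal{V}$-convergence, giving (b).

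It remains to verify (WS-M). Suppose $x \in \bigcap_n V_n^{x_n}$; this unfolds to $d(x_n, x) < 2^{-n}$ for every $n$, so $d(x_n, x) \to 0$. If some $y$ also belongs to the intersection, then $d(x_n, y) < 2^{-n}$ as well, and (W3) forces $y = x$. Thus $\bigcap_n V_n^{x_n} = \{x\}$. The convergence clause of (WS-M) is then immediate: $d(x_n, x) \to 0$ already gives $x_n \longrightarrow x$ in $\mathcal{T}_d = \mathcal{T}_{d,\mathcal{V}}$, so the additional Cauchy-type hypothesis $x_n \in \bigcap_{k=0}^n V_k^{x_k}$ is not needed.

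The only genuinely delicate step is the appeal to (W3), which is exactly what ensures that $\bigcap_n V_n^{x_n}$ reduces to the singleton $\{x\}$; without it, a single sequence could admit two distinct $d$-limits and the singleton conclusion would break down. All other verifications reduce to routine consequences of semi-metrizability combined with the previously established characterization of weakly semi-metric spaces via sequential closure.
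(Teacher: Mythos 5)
Your proposal is correct. Note that the paper states Theorem~\ref{semisemi} with no proof at all, so there is nothing to compare against; your argument supplies exactly the construction one would expect the author to have in mind. Taking $V_n^x=B\left(x,2^{-n}\right)$ makes the partial intersections collapse to $B\left(x,2^{-n}\right)$, so ${\mathcal T}_{d,{\mathcal V}}$ is literally ${\mathcal T}_d$, which is the topology of $X$ by semi-metrizability; the reduction of $\overline{A}=C_{\mathcal V}\left(A\right)$ to the Fr\'echet property plus the equivalence of convergence and ${\mathcal V}$-convergence for injective sequences is a legitimate appeal to the preceding characterization theorem; and you correctly isolate (W3) as the one hypothesis that forces $\bigcap_n B\left(x_n,2^{-n}\right)$ to be the singleton $\left\{x\right\}$ in (WS-M), the convergence clause being automatic since $d\left(x_n,x\right)\to 0$ already implies topological convergence in a semi-metrizable space. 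The only step worth flagging is the claim that, for injective sequences, ${\mathcal V}$-convergence implies topological convergence: under the paper's literal definition of an injective sequence (one whose range is infinite) this can fail, e.g.\ for a sequence that repeats a fixed point $y\not=x$ on even indices while the odd-indexed terms $d$-converge to $x$. The implication does hold when ``injective'' is read as one-to-one, and that reading is forced anyway by the paper's own Lemma~\ref{caract1}, so this is a defect of the paper's definition rather than of your argument; you might simply note that you pass to a one-to-one subsequence where needed.
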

By using Theorem~\ref{wdsemi} and Theorem~\ref{semisemi}, it
follows that the notion of a weakly semi-metrizable space
generalizes both the notions of a weakly developable space and a
semi-metrizable space.
\begin{rem}\label{rwdsemi} By applying Theorem~\ref{stratif}, every
weakly developable non developable space provides us with an
example of a weakly semi-metrizable space which is not
symmetrizable (and then, not semi-metrizable). On the other hand,
every semi-metrizable non developable Hausdorff space provides us
with an example of a weakly semi-metrizable space which is not
weakly developable.
\end{rem}

The following diagram summarizes some notions of generalized
metrizable spaces used in the paper and close to the notion of a weakly semi-metrizable space.\\

\noindent {\tiny{
\begin{psmatrix}[rowsep=0.8cm,colsep=0.2cm]
&&&{\textbf{Metrizable}}&&\textbf{Compact}\\
&\deuxlignes{\textbf{Sharp}\\\textbf{base}}&\textbf{Developable}&&\textbf{Stratifiable}&\textbf{Paracompact}\\
&\deuxlignes{\textbf{Weakly}\\\textbf{developable}}&&\deuxlignes{\textbf{Semi-}\\\textbf{metrizable}}&&\deuxlignes{\textbf{Collectionwise}\\
\textbf{normal}}\\
\textbf{BCO}&&\deuxlignes{\red{\textbf{Weakly}}\\\red{\textbf{semi-metrizable}}}
&&\deuxlignes{\textbf{Semi-}\\\textbf{startifiable}}&\textbf{Normal}\\
&&&&\textbf{Subparacompact}&\textbf{Regular}\\
&&\deuxlignes{\textbf{$1^{st}$ countable
+}\\\textbf{$G_\delta$-$\Delta$}}&&\textbf{Submetacompact}&
\psset{arrows=->,nodesep=1mm,labelsep=1.5mm}
\everypsbox{\scriptstyle} \ncline{1,6}{2,6} \ncline{1,4}{2,2}
\ncline{1,4}{2,3} \ncline{1,4}{2,5} \ncline{2,2}{3,2}
\ncline{2,6}{3,6} \ncline{2,3}{3,2} \ncline{2,3}{3,4}
\ncline{2,5}{4,4} \ncline{2,6}{5,5} \ncline{2,5}{2,6}
\ncline{2,5}{4,5} \ncline{3,2}{4,1} \ncline{3,2}{4,3}
\ncline{3,4}{4,5} \ncline{3,4}{4,3} \ncline{3,4}{4,4}
\ncline{3,6}{4,6} \ncline{4,5}{5,5} \ncline{4,3}{6,3}
\ncline{4,6}{5,6} \ncline{5,5}{6,5}
\end{psmatrix}
}}

\section{Extension of continuous functions: comments and remarks}
Historically, semi-metrizable spaces and developable spaces have
been introduced independently by Fr\'echet and Moore respectively.
The first as a generalization of metric spaces, the second to deal
with a form of general analysis. While generalized metrizable
spaces have been intensively studied by several authors since
their introduction, the modern era of semi-metrizable spaces began
with the important work of Jones, Heath and McAuley. The
metrization theorem of the factorization type of Bing is one of
the most influential results of a long and fruitful period of
research in metrization theory and the theory of generalized
metrizable spaces. Although, the notion of a weakly developable
space is introduced more recently, it has been considered by
several authors in the last decade. It fits very well in the class
of generalized metric spaces and allows to obtain various new
results. See for example,
\cite{all1,aac,ac2,s6,s4,balbu,s5,gut,lub1,lub2,leimou,s7}.

\subsection{Tietze's extension theorem and its generalizations}
There is a long history between generalized metrizable spaces and
the problem of extending continuous functions. Generalized metric
notions are often characterized by means of results on extension
of continuous functions. The Tietze-Urysohn extension theorem is
one of the most important results in this context. Several other
deep results on extension of continuous functions exist in the
literature and in particular, Dugundji extension theorem for
metric spaces and Borges extension theorem for stratifiable
spaces.

Another form of the problem of extending continuous functions is
the concept of absolute (neighborhood) extensor and absolute
(neighborhood) retract. This approach has several applications to
different areas of mathematics including applications to
topological methods for partial differential equations, ordinary
differential equations and differential inclusions. As well known,
this concept has been considered by several authors and in
particular, by Borsuk, Kuratowski, Fox, Dowker, Dugundji, Hu,
Banner, Michael and by many other mathematicians. For further
details on infinite-dimensional topology, we refer to
\cite{bess,mill,nad}. In this context and especially when dealing
with the problem of extending set-valued functions (see
Curtis-Schori theorem, for example), various concepts of
completeness such as Cauchy completeness and McAuley completeness
as well as notions of connectedness like arc-wise connectedness,
path-wise connectedness, connectedness and local connectedness are
often considered in the settings of metrizable and developable
spaces. Note that many results in this direction have been
obtained recently in the settings of stratifiable spaces and their
generalizations by T. Banakh who introduced the notion of a
quarter-stratifiable space (see \cite{banak}).

As mentioned in \cite{heath}, many theorems concerning developable
spaces have analogies in semi-metric spaces. This suggests the
question of whether this assertion remains true for weakly
semi-metrizable spaces. It turns out that many results related to
the problem of extending continuous functions have already been
considered on semi-metrizable spaces but not yet on weakly
developable spaces.

\subsection{E. Michael selection theory}
An important point of view of the problem of extending continuous
functions is Michael's selection theory. It treats Tietze-Urysohn
extension as a special case and gives characterizations for some
generalized metrizable spaces such as normal spaces,
collectionwise normal spaces, fully normal spaces and paracompact
spaces. For a complete presentation of the work of E. Michael, we
refer to \cite{rep2}, and for an overview of recent results on the
theory of continuous selections of set-valued mappings, we refer
to \cite{rep1}.

Let $f:A\subset X\longrightarrow Y$ be a continuous function. One
defines a lower semicontinuous set-valued function $F$ on $X$ by
\begin{eqnarray*}
F\left(x\right)=
\begin{cases}\left\{f(x)\right\} &\text{if}\quad x\in A,\\
Y &\text{otherwise.}
\end{cases}
\end{eqnarray*}
Clearly, any continuous selection of $F$ is a continuous extension
of $f$.

In order to generalize a result of E. Michael on double selection,
J. Calbrix and the present author introduced the notion of weakly
developable spaces. Some results have been first obtained in
\cite{ca} for \u{C}ech-complete weakly $k$-developable spaces and
generalized later in \cite{ac2} for spaces having a monotonically
complete base of countable order (BCO). New results on selection
theory of set-valued mappings have been also obtained recently by
several authors (see for example, \cite{gut,s3,s1}). As showed in
\cite{s2}, note that having a BCO in the selection theorem
obtained in \cite{s1} for set-valued mappings defined on
zero-dimensional paracompact spaces with values in spaces having a
BCO can not be replaced by being stratifiable.

For convenience, recall that a \u{C}ech-complete space is a
completely regular space which is a $G_\delta$-set in some
Hausdorff compact space. A metrizable space is completely
metrizable if and only if it is \u{C}ech-complete. An internal
characterization of \u{C}ech-complete spaces due to
Arhangel'ski\u{\i} \cite{ssmt} and Frolik \cite{goftg} by means of
sequence of open covers is the following: A Hausdorff space $X$ is
said to be A.F.-complete if it has an A.F.-complete sequence
$\left({\mathcal G}_n\right)$ of open covers. That is, for every
centered family ${\mathcal F}$ of subsets of $X$, if
$$\delta\left({\mathcal F}\right)<{\mathcal G}_n, \text{ for every } n,
\text{ then } \bigcap\left\{F\mid F\in{\mathcal
F}\right\}\not=\emptyset$$ where $\delta\left({\mathcal
F}\right)<{\mathcal G}_n$ means that there exists $F\in {\mathcal
F}$ and $G\in {\mathcal G}_n$ such that $F\subset G$. A completely
regular space is A.F.-complete if and only if it is
\u{C}ech-complete.

A base ${\mathcal B}$ for a space $X$ is called a base of
countable order (BCO) if for every $x\in X$ and every strictly
decreasing sequence $\left(G_n\right)_n$ of elements of ${\mathcal
B}$ containing $x$, $\left(G_n\right)_n$ is a base at $x$. Every
weakly developable space has a BCO. A space is metrizable if and
only if it is paracompact and has a BCO (see \cite{ssmt}) and a
space is developable if and only if it is submetacompact and has a
BCO (see \cite{ww3}). Thus, the notions of semi-metrizable space
and space having a BCO are different.

Like A.F.-complete spaces, spaces having a monotonically complete
BCO is an other generalization of complete metric spaces. A base
${\mathcal B}$ is said to be monotonically complete base if for
every decreasing sequence $\left(G_n\right)_n$ of elements of
${\mathcal B}$, we have $\bigcap_n{\overline B_n}\not=\emptyset$
(see \cite{ww1,ww3,ww2}).

Roughly speaking, proofs of the results on existence of selection
of set-valued mappings with values on a space $Y$ having a BCO are
generally based on a construction of an open continuous mapping
$g$ from a suitable metrizable space $M$ onto $Y$ such that
$g^{-1}\left(y\right)$ is complete with respect to a fixed
distance $d$ on $M$, for every $y\in Y$. This is possible thanks
to monotonic completeness of the base of countable order.

One might then ask what kind of completeness we can define on
weakly semi-metrizable spaces to obtain similar results. Thus, it
is not clear whether these results on selection of set-valued
mappings which are obtained on weakly developable spaces, and then
on developable spaces, remain also true on semi-metrizable spaces.

\subsection{Fixed point theory}
Fixed point theory which is a large and an important field of
mathematics has to be considered here too. Although, there are
many results on fixed point theory in the settings of metrizable
spaces, there is recently a real interest in generalization of
these results to semi-metrizable spaces. (See for example,
\cite{ssafp} and the references therein). Several adaptations of
the notion of a contraction mapping on metric spaces have been
already given for semi-metrizable spaces and for other concepts of
generalized metrisable spaces. See for example \cite{chob}, for
multi-metric spaces.

Weakly semi-metrizable spaces provide us with a distance function
which can be useful to obtain similar adaptations. It should be
interesting to know what kind of fixed point theorems one can
obtain on weakly developable spaces and more generally, on weakly
semi-metrizable spaces.

\bibliographystyle{plain}

\end{document}